\documentclass[11pt,letterpaper,reqno]{amsart}

\usepackage[T1]{fontenc}
\usepackage{amsmath,amssymb,amsthm,mathtools}
\usepackage{microtype}
\usepackage[colorlinks=true,linkcolor=blue,citecolor=blue,urlcolor=blue]{hyperref}

\allowdisplaybreaks[3]

\newtheorem{theorem}{Theorem}[section]
\newtheorem{lemma}[theorem]{Lemma}
\newtheorem{proposition}[theorem]{Proposition}
\newtheorem{corollary}[theorem]{Corollary}
\theoremstyle{definition}
\newtheorem{definition}[theorem]{Definition}
\newtheorem{example}[theorem]{Example}

\newcommand{\R}{\mathbb R}
\newcommand{\dd}{\,d}
\newcommand{\abs}[1]{\left\lvert#1\right\rvert}
\newcommand{\norm}[1]{\left\lVert#1\right\rVert}
\newcommand{\ph}{\varphi}

\begin{document}

\title[Positive Solutions for a Minkowski-Curvature Problem]
{Positive Solutions for a One-Dimensional Minkowski-Curvature Equation\\
with a Sign-Changing Nonlinearity under Mixed Boundary Conditions}

\author[A. Xiao]{Ao Xiao}
\address{College of Mathematics and Statistics, Northwest Normal University,
Lanzhou 730070, China}
\email{xiaoao2math@gmail.com}

\subjclass[2020]{Primary 34B18; Secondary 34B15, 47H10}
\keywords{Minkowski-curvature equation, sign-changing nonlinearity,
mixed boundary conditions, positive solution, fixed point index in cones}

\begin{abstract}
We establish the existence of a positive solution for a one-dimensional
Minkowski-curvature equation with mixed boundary conditions and a
sign-changing nonlinearity.  A nonnegative linear shift and the Green kernel
of the associated linear problem produce an invariant cone.
Principal-eigenvalue comparisons and the fixed point index give cone expansion
near zero and compression at infinity.  A globally defined auxiliary equation
and a first-contact argument show that the resulting solution has slope below
one and hence solves the original equation.  We also derive a directly
verifiable asymptotic criterion.
\end{abstract}

\maketitle

\section{Introduction}\label{sec:introduction}

A prototype prescribed mean curvature equation for a spacelike graph over a
bounded domain \(\Omega\subset\R^N\) is
\[
-\operatorname{div}\left(
\frac{\nabla v}{\sqrt{1-\abs{\nabla v}^2}}
\right)=g(x,v),
\qquad x\in\Omega,
\]
where the spacelike constraint is \(\abs{\nabla v}<1\).  Its geometric
background in Lorentz--Minkowski space goes back to Bartnik and Simon
\cite{BartnikSimon}; radial and one-dimensional boundary value problems for
the Euclidean and Minkowski mean curvature operators were subsequently
studied by several methods, for instance in \cite{BereanuJebeleanMawhin}.
When \(N=1\) and \(\Omega=(a,b)\), the gradient and divergence reduce to
ordinary derivatives.  In this interval setting, with \(I=[a,b]\) and
\(-\infty<a<b<+\infty\), we study the Neumann--Dirichlet problem
\begin{equation}\label{P}
\begin{cases}
-\displaystyle\left(\dfrac{u'(t)}{\sqrt{1-(u'(t))^2}}\right)'
=f(t,u(t)),&t\in(a,b),\\[1.2ex]
u'(a)=0,\qquad u(b)=0.
\end{cases}
\end{equation}
Here \(f:I\times[0,+\infty)\to\R\) is continuous and may take negative
values.  We seek a solution satisfying \(u(t)>0\) for \(a\le t<b\) and
\(\norm{u'}_\infty<1\).

For one-dimensional Dirichlet problems, Coelho, Corsato, Obersnel, and Omari
\cite{CoelhoCorsatoObersnelOmari} obtained existence and multiplicity results
without requiring the nonlinearity to be nonnegative.  Under the
Neumann--Dirichlet conditions in \eqref{P}, Pei, Wang, and Lv
\cite{PeiWangLv} applied a norm-type cone expansion--compression theorem, but
their nonlinearity was assumed to be nonnegative.

Sign-changing terms have been treated mainly in related, but structurally
different, settings.  Indefinite equations with a separated right-hand side
\(a(t)g(u)\) have been studied under periodic and Neumann conditions
\cite{BoscagginFeltrinPeriodic,BoscagginFeltrinNeumann}, as well as under
Dirichlet and Robin-type conditions \cite{HeMiao,YeYuTang}.  For autonomous
Dirichlet problems, Huang \cite{Huang2019,Huang2025} investigated global
bifurcation, exact multiplicity, and the classification of solution branches
for sign-changing nonlinearities \(f(u)\).  Repeated bifurcation patterns for a
one-dimensional non-autonomous problem were analyzed in \cite{LeeSimYang}.
In contrast, \eqref{P} combines Neumann--Dirichlet boundary conditions with a
general continuous nonlinearity \(f(t,u)\) that may change sign and need not be
separable.  Our purpose is the existence of a positive solution rather than a
classification of global bifurcation curves.

Our argument is inspired by Ding and Li \cite{DingLi}, who combined a
nonnegative linear shift, a Green function, a cone, and principal-eigenvalue
comparisons for a semilinear elliptic problem with a sign-changing term.  Two
additional issues arise for \eqref{P}.  After the Minkowski-curvature equation
is written in second-order form, the factor \((1-(u')^2)^{3/2}\) is not defined
on all of \(C^1(I)\); moreover, a sign-changing \(f\) prevents the direct
integral operator used in the nonnegative case from preserving a positive
cone.  Following the extension device in
\cite{CoelhoCorsatoObersnelOmari,LeeSimYang}, we first introduce a globally
defined auxiliary equation and use a first-contact argument to recover
\(\norm{u'}_\infty<1\).  We then impose the one-sided bound
\(f(t,s)\ge-\beta(t)s\) and work with the positive Green function of
\(-u''+\beta(t)u\).  This produces an invariant cone, while weighted principal
eigenvalues yield expansion near zero and compression at infinity on
rectangular subsets of \(C^1(I)\).

The resulting theorem requires only a lower comparison for the shifted
nonlinearity near zero and an upper comparison at large amplitudes.  Thus the
sign-changing class is not imposed abstractly through cone invariance: its
negative part is controlled by an explicit one-sided linear bound.  We also
derive a directly verifiable criterion from uniform slopes at zero and
infinity and provide a genuinely sign-changing example.  Section
\ref{sec:preliminaries} develops the auxiliary equation, Green kernel, cone,
and spectral tools.  Section~\ref{sec:main} proves the main theorem, while
Sections~\ref{sec:asymptotic} and \ref{sec:example} contain the asymptotic
criterion and the example.

\section{Preliminaries}\label{sec:preliminaries}

Set \(L=b-a\).  Throughout the paper we assume:

\medskip
\noindent\textbf{\((H_0)\)}
The function \(f\in C(I\times[0,+\infty),\R)\) satisfies \(f(t,0)=0\), and
there exists \(\beta\in C(I,[0,+\infty))\) such that
\begin{equation}\label{H0-shift}
f(t,s)\ge-\beta(t)s,
\qquad (t,s)\in I\times[0,+\infty).
\end{equation}

Condition \((H_0)\) controls only the negative part of \(f\) and is the
one-dimensional counterpart of the shift condition used in
\cite{DingLi}.  We fix one such function \(\beta\) from now on.

\begin{definition}\label{def:solution}
A solution of \eqref{P} is a function \(u\in C^1(I)\) such that
\(\norm{u'}_\infty<1\), \(\ph(u')\in C^1(I)\), and \eqref{P} holds, where
\begin{equation}\label{phi-def}
\ph(s)=\frac{s}{\sqrt{1-s^2}},\qquad \abs{s}<1.
\end{equation}
It is called positive if \(u(t)>0\) for \(a\le t<b\) and \(u(b)=0\).
\end{definition}

Direct calculation gives
\begin{equation}\label{phi-properties}
\ph'(s)=\frac{1}{(1-s^2)^{3/2}},
\qquad
\ph^{-1}(z)=\frac{z}{\sqrt{1+z^2}},\quad z\in\R.
\end{equation}
Thus \(\ph^{-1}:\R\to(-1,1)\) is odd and strictly increasing; in
particular,
\begin{equation}\label{phi-inverse-odd}
\abs{\ph^{-1}(z)}=\ph^{-1}(\abs z)<1,
\qquad z\in\R.
\end{equation}

Define the continuous extension
\begin{equation}\label{h-def}
h(p)=
\begin{cases}
(1-p^2)^{3/2},&\abs{p}\le1,\\
0,&\abs{p}>1.
\end{cases}
\end{equation}
Then \(0\le h(p)\le1\) for all \(p\in\R\).  We associate with \eqref{P}
the globally defined problem
\begin{equation}\label{W}
\begin{cases}
-u''(t)=f(t,u(t))h(u'(t)),&t\in(a,b),\\
u'(a)=0,\qquad u(b)=0.
\end{cases}
\end{equation}

\begin{lemma}\label{lem:equivalence}
Assume \((H_0)\) and let \(u\ge0\).  Then \(u\) is a solution of \eqref{P}
if and only if \(u\in C^2(I)\) is a solution of \eqref{W}.
\end{lemma}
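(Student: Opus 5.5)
The plan is to handle the two implications separately. The forward direction is a chain-rule computation. The converse uses a first-contact argument to recover the gradient bound \(\norm{u'}_\infty<1\). The hypothesis \(u\ge0\) is needed only so that \(f(t,u(t))\) is defined, since \(f\) lives on \(I\times[0,+\infty)\).

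\emph{From \eqref{P} to \eqref{W}.} Let \(u\) be a solution of \eqref{P} in the sense of Definition~\ref{def:solution}, and set \(w=\ph(u')\in C^1(I)\). By \eqref{phi-properties}, \(\ph^{-1}\) is smooth on \(\R\), so \(u'=\ph^{-1}(w)\in C^1(I)\), that is, \(u\in C^2(I)\). Since \(\norm{u'}_\infty<1\), the chain rule gives
\[
(\ph(u'))'=\ph'(u')u''=\frac{u''}{(1-(u')^2)^{3/2}}.
\]
Multiplying the equation in \eqref{P} by \((1-(u')^2)^{3/2}=h(u')\) yields \(-u''=f(t,u)h(u')\). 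The boundary conditions are the same in both problems, so \(u\) solves \eqref{W}.

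\emph{From \eqref{W} to \eqref{P}.} Let \(u\in C^2(I)\), \(u\ge0\), solve \eqref{W}. The key step, and the only real obstacle, is to show \(\norm{u'}_\infty<1\). Suppose instead that the closed set \(\{t\in I:\abs{u'(t)}\ge1\}\) is nonempty, and let \(t_0\) be its minimum. Since \(u'(a)=0\), we have \(t_0>a\), \(\abs{u'}<1\) on \([a,t_0)\), and \(\abs{u'(t_0)}=1\).

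On \([a,t_0)\) we have \(h(u')=(1-(u')^2)^{3/2}>0\). Hence \(w=\ph(u')\) is \(C^1\) there, and the computation above, read backwards, gives \(w'=-f(t,u(t))\). Integrating from \(a\) and using \(w(a)=0\),
\[
\abs{w(t)}\le\int_a^b\abs{f(s,u(s))}\dd s=:M<\infty,\qquad t\in[a,t_0).
\]
Here \(M\) is finite because \(f\) is continuous and \(u\) is bounded on the compact interval \(I\). By \eqref{phi-inverse-odd} and the monotonicity of \(\ph^{-1}\),
\[
\abs{u'(t)}=\ph^{-1}(\abs{w(t)})\le\ph^{-1}(M)<1,\qquad t\in[a,t_0).
\]
Letting \(t\to t_0^-\) and using continuity of \(u'\) gives \(\abs{u'(t_0)}\le\ph^{-1}(M)<1\), a contradiction.

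Therefore \(\norm{u'}_\infty<1\). Then \(\ph(u')\in C^1(I)\) by the chain rule, since \(u\in C^2(I)\) and \(\ph\) is smooth on \((-1,1)\). Dividing the equation in \eqref{W} by \(h(u')>0\) recovers \eqref{P}, with the same boundary conditions. This completes the plan.
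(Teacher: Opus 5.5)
Your proposal is correct and follows essentially the same route as the paper: the chain rule \(u'=\ph^{-1}(\ph(u'))\) for the forward direction, and, for the converse, a first-contact argument on which \(-(\ph(u'))'=f(t,u)\) holds, giving \(\abs{u'}\le\ph^{-1}(M)<1\) for a finite integral bound \(M\) of \(\abs{f(s,u(s))}\), which contradicts \(\abs{u'}=1\) at the contact point. Taking the minimum of \(\{\abs{u'}\ge1\}\) and bounding by \(\int_a^b\abs{f(s,u(s))}\dd s\) rather than \(\int_a^{t_*}\abs{f(s,u(s))}\dd s\) are only cosmetic differences.
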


\begin{proof}
Suppose first that \(u\) solves \eqref{P}.  Since
\(\ph(u')\in C^1(I)\) and \(\ph^{-1}\in C^1(\R)\), we have
\(u'=\ph^{-1}(\ph(u'))\in C^1(I)\).  Hence \(u\in C^2(I)\), and
\eqref{phi-properties} yields
\(\bigl(\ph(u')\bigr)'=u''/(1-(u')^2)^{3/2}\).
Because \(\norm{u'}_\infty<1\), equation \eqref{P} is equivalent to
\(-u''=f(t,u)h(u')\), so \(u\) solves \eqref{W}.

Conversely, let \(u\in C^2(I)\) solve \eqref{W}.  We prove that
\begin{equation}\label{slope-strict}
\norm{u'}_\infty<1.
\end{equation}
If not, continuity of \(u'\) and \(u'(a)=0\) imply that the set
\(\{t\in(a,b]:\abs{u'(t)}=1\}\) is nonempty.  Let \(t_*\) be its first
point.  Then \(\abs{u'(t)}<1\) for \(a\le t<t_*\), and on this interval
\(-\bigl(\ph(u')\bigr)'=f(t,u)\).  Integrating from \(a\) to \(t<t_*\)
gives
\begin{equation}\label{inverse-before-hit}
u'(t)=-\ph^{-1}\left(\int_a^t f(s,u(s))\dd s\right).
\end{equation}
Since \(M_*:=\int_a^{t_*}\abs{f(s,u(s))}\dd s<+\infty\), the oddness and
monotonicity of \(\ph^{-1}\) imply
\[
\abs{u'(t)}
\le \ph^{-1}\left(\int_a^t\abs{f(s,u(s))}\dd s\right)
\le \ph^{-1}(M_*)<1.
\]
Letting \(t\to t_*^-\) gives
\(1=\abs{u'(t_*)}\le\ph^{-1}(M_*)<1\), a contradiction.  Thus
\eqref{slope-strict} holds.  Consequently
\(h(u')=(1-(u')^2)^{3/2}\), and \eqref{W} together with
\eqref{phi-properties} gives \eqref{P}.
\end{proof}

We next consider the shifted linear problem
\begin{equation}\label{linear-beta}
\begin{cases}
-v''(t)+\beta(t)v(t)=y(t),&t\in(a,b),\\
v'(a)=0,\qquad v(b)=0.
\end{cases}
\end{equation}
Let \(\eta\) and \(\zeta\) solve, respectively,
\begin{equation}\label{eta-zeta}
\begin{cases}
-\eta''+\beta(t)\eta=0,\\
\eta(a)=1,\quad \eta'(a)=0,
\end{cases}
\qquad
\begin{cases}
-\zeta''+\beta(t)\zeta=0,\\
\zeta(b)=0,\quad \zeta'(b)=-1.
\end{cases}
\end{equation}
The comparison principle gives
\begin{equation}\label{eta-zeta-sign}
\eta(t)>0,\quad \eta'(t)\ge0\quad(t\in I),
\qquad
\zeta(t)>0,\quad \zeta'(t)<0\quad(a\le t<b).
\end{equation}
The quantity \(W=\eta'(t)\zeta(t)-\eta(t)\zeta'(t)\) is independent of
\(t\), and
\(W=-\eta(b)\zeta'(b)=\eta(b)>0\).  The Green function of
\eqref{linear-beta} is therefore
\begin{equation}\label{green-formula}
G_\beta(t,s)=\frac1W
\begin{cases}
\eta(t)\zeta(s),&a\le t\le s\le b,\\
\eta(s)\zeta(t),&a\le s\le t\le b.
\end{cases}
\end{equation}
In particular,
\begin{equation}\label{green-positive}
G_\beta(t,s)>0
\quad(a\le t<b,\ a\le s<b),
\qquad G_\beta(b,s)=0.
\end{equation}
For \(y\in C(I)\), set
\begin{equation}\label{Sbeta}
(S_\beta y)(t)=\int_a^bG_\beta(t,s)y(s)\dd s.
\end{equation}
Then \(S_\beta y\in C^2(I)\) is the unique solution of
\eqref{linear-beta}, and
\(S_\beta:C(I)\to C^1(I)\) is a compact linear operator.

For fixed \(s\in(a,b)\), the function \(t\mapsto G_\beta(t,s)\) attains its
maximum at \(t=s\).  Define
\begin{equation}\label{Gamma-def}
\Gamma_\beta(t)=
\min\left\{\frac{\eta(t)}{\eta(b)},
            \frac{\zeta(t)}{\zeta(a)}\right\},
\qquad t\in I.
\end{equation}
It follows directly from \eqref{green-formula} that
\begin{equation}\label{green-lower}
G_\beta(t,s)\ge
\Gamma_\beta(t)G_\beta(s,s),
\qquad t\in I,\quad s\in(a,b),
\end{equation}
and
\begin{equation}\label{Gamma-positive}
\Gamma_\beta(t)>0\quad(a\le t<b),
\qquad \Gamma_\beta(b)=0.
\end{equation}

Let
\begin{equation}\label{E-space}
E=C^1(I),
\qquad
\norm{u}_E=\norm{u}_\infty+\norm{u'}_\infty,
\end{equation}
and introduce the cone
\begin{equation}\label{cone}
K=\left\{u\in E:
u(t)\ge0,
\quad
u(t)\ge\Gamma_\beta(t)\norm{u}_\infty
\text{ for }t\in I\right\}.
\end{equation}
It is immediate that \(K\) is a closed convex cone in \(E\).

\begin{lemma}\label{lem:S-cone}
If \(y\in C(I)\) and \(y\ge0\), then \(S_\beta y\in K\).  If, in addition,
\(y\not\equiv0\), then \((S_\beta y)(t)>0\) for \(a\le t<b\).
\end{lemma}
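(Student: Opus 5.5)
Set \(u=S_\beta y\). The plan is to read off both claims directly from the Green function representation \eqref{Sbeta}, using the sign information \eqref{green-positive} and the Harnack-type lower bound \eqref{green-lower}.

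\textbf{Step 1: nonnegativity and a bound for the sup norm.} Nonnegativity is immediate, since \(G_\beta\ge0\) on \(I\times I\) by \eqref{green-formula}, \eqref{eta-zeta-sign} and \eqref{green-positive}, and \(y\ge0\). Next I bound \(\norm{u}_\infty\) from above by the "diagonal" integral. The text notes that \(t\mapsto G_\beta(t,s)\) attains its maximum at \(t=s\); this follows from \eqref{green-formula}, because \(\eta\) is nondecreasing and \(\zeta\) is decreasing. Hence \(G_\beta(t,s)\le G_\beta(s,s)\) for all \(t\), and therefore
\[
\norm{u}_\infty\le\int_a^bG_\beta(s,s)y(s)\dd s .
\]
At \(s=a\) or \(s=b\) the bound \(G_\beta(t,s)\le G_\beta(s,s)\) still holds by continuity. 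In any case, endpoints have measure zero and do not affect the integrals.

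\textbf{Step 2: the cone inequality.} For each \(t\in I\), integrate \eqref{green-lower} against \(y\ge0\) over \(s\in(a,b)\):
\[
u(t)\ge\Gamma_\beta(t)\int_a^bG_\beta(s,s)y(s)\dd s\ge\Gamma_\beta(t)\norm{u}_\infty .
\]
Together with \(u\in C^2(I)\subset E\), this gives \(u\in K\).

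\textbf{Step 3: strict positivity.} Assume \(y\not\equiv0\). By continuity there is an open interval \(J\subset(a,b)\) on which \(y>0\). Fix \(t\in[a,b)\). By \eqref{green-positive}, \(G_\beta(t,s)>0\) for all \(s\in J\), so the integrand \(G_\beta(t,\cdot)y(\cdot)\) is continuous, nonnegative, and strictly positive on \(J\). Hence \(u(t)>0\).

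The only point needing care is the verification that \(G_\beta(\cdot,s)\) peaks at \(s\), which underlies the sup-norm bound in Step 1. This rests on the monotonicity of \(\eta\) and \(\zeta\) in \eqref{eta-zeta-sign}. If needed, I would re-derive it briefly: \(\eta''=\beta\eta\ge0\) with \(\eta'(a)=0\) gives \(\eta'\ge0\), and the analogous argument from \(b\) gives \(\zeta'<0\). There is no real obstacle beyond this.
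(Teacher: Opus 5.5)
Your proof is correct and follows the paper's argument essentially verbatim: nonnegativity from \eqref{green-positive}, the bound \(\norm{u}_\infty\le\int_a^bG_\beta(s,s)y(s)\dd s\) from the diagonal maximum of \(G_\beta(\cdot,s)\), integration of \eqref{green-lower} against \(y\) for the cone inequality, and \eqref{green-positive} for strict positivity. Your extra detail (the interval \(J\) where \(y>0\), and the monotonicity of \(\eta,\zeta\) behind the diagonal maximum) makes explicit what the paper leaves implicit; the only small point is that deducing \(\eta'\ge0\) from \(\eta''=\beta\eta\ge0\) presupposes \(\eta\ge0\), which needs a first-zero argument.
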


\begin{proof}
Set \(v=S_\beta y\).  By \eqref{green-positive}, \(v\ge0\).  Moreover,
\eqref{green-lower} gives
\[
v(t)\ge
\Gamma_\beta(t)\int_a^bG_\beta(s,s)y(s)\dd s,
\]
whereas
\[
\norm{v}_\infty\le
\int_a^bG_\beta(s,s)y(s)\dd s.
\]
Hence \(v(t)\ge\Gamma_\beta(t)\norm{v}_\infty\), and \(v\in K\).
The strict positivity follows from \eqref{green-positive} whenever
\(y\not\equiv0\).
\end{proof}

For \(u\in K\), define
\begin{equation}\label{Fu-def}
F_u(t)=f(t,u(t))h(u'(t))+\beta(t)u(t)
\end{equation}
and
\begin{equation}\label{A-def}
(Au)(t)=S_\beta F_u(t)
=\int_a^bG_\beta(t,s)F_u(s)\dd s.
\end{equation}

\begin{proposition}\label{prop:A}
Under \((H_0)\), \(A:K\to K\) is completely continuous.  Moreover,
\(u\in K\) is a fixed point of \(A\) if and only if \(u\) is a
nonnegative solution of \eqref{W}.
\end{proposition}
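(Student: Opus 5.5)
The plan has three steps: show \(F_u\ge0\) so that Lemma~\ref{lem:S-cone} puts \(Au\) in \(K\); get complete continuity from compactness of \(S_\beta\); and identify fixed points with solutions of \eqref{W} by uniqueness for \eqref{linear-beta}.

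\textbf{Step 1 (invariance).} Fix \(u\in K\). Then \(u\ge0\), so \(f(t,u(t))\) is defined, and \(F_u\in C(I)\) because \(f\), \(h\), \(u\), \(u'\) and \(\beta\) are continuous. We check \(F_u\ge0\) pointwise. If \(f(t,u(t))\ge0\), this is clear, since \(h\ge0\) and \(\beta u\ge0\). If \(f(t,u(t))<0\), we use \(0\le h\le1\) and \((H_0)\):
\[
f(t,u(t))h(u'(t))\ge f(t,u(t))\ge-\beta(t)u(t),
\]
so again \(F_u(t)\ge0\). Lemma~\ref{lem:S-cone} then gives \(Au=S_\beta F_u\in K\).

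\textbf{Step 2 (complete continuity).} Write \(A=S_\beta\circ N\), where \(N:K\to C(I)\) is the Nemytskii map \(u\mapsto F_u\).

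First, \(N\) is continuous. If \(u_n\to u\) in \(E\), then \(u_n\to u\) and \(u_n'\to u'\) uniformly on \(I\). All values \((t,u_n(t),u_n'(t))\) lie in a compact set \(I\times[0,R]\times[-R,R]\). On that set \(f\) and \(h\) are uniformly continuous, so \(F_{u_n}\to F_u\) uniformly.

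Second, \(N\) maps bounded sets to bounded sets. If \(\norm{u}_E\le R\), then
\[
\abs{F_u}\le \max_{I\times[0,R]}\abs{f}+\norm{\beta}_\infty R .
\]
Since \(S_\beta:C(I)\to C^1(I)\) is a compact linear operator, hence continuous, \(A\) is continuous and sends bounded sets to relatively compact sets. So \(A\) is completely continuous.

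If the compactness of \(S_\beta\) needed justification, I would use Arzelà–Ascoli. Differentiating \eqref{green-formula} gives \((S_\beta y)'(t)=\int_a^b\partial_tG_\beta(t,s)y(s)\dd s\), where \(\partial_tG_\beta\) is bounded with a jump only on the diagonal. Also \((S_\beta y)''=\beta S_\beta y-y\) is bounded. Together these make \(S_\beta y\) and \((S_\beta y)'\) equicontinuous and bounded on bounded sets of \(y\).

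\textbf{Step 3 (fixed points).} Suppose \(u\in K\) and \(Au=u\). Then \(u=S_\beta F_u\in C^2(I)\) solves \eqref{linear-beta} with \(y=F_u\). That is, \(-u''+\beta u=f(t,u)h(u')+\beta u\), with \(u'(a)=0\) and \(u(b)=0\). Cancelling \(\beta u\) gives \eqref{W}, and \(u\ge0\) since \(u\in K\).

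Conversely, let \(u\ge0\) be a \(C^2\) solution of \eqref{W}. Then \(u\) solves \eqref{linear-beta} with \(y=F_u\in C(I)\). By uniqueness for \eqref{linear-beta}, \(u=S_\beta F_u=Au\). To use the cone we still need \(u\in K\). As in Step 1, \(F_u\ge0\) (that computation only used \(u\ge0\)), so Lemma~\ref{lem:S-cone} gives \(u=S_\beta F_u\in K\).

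\textbf{Main obstacle.} The only delicate point is the sign of \(F_u\) where \(f\) is negative. It needs \(h\le1\), so that multiplying a negative number by \(h\) cannot push it below \(-\beta u\). The rest is routine.
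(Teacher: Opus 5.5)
Your proposal is correct and follows the paper's proof essentially step for step. The three ingredients match: the sign argument $F_u\ge0$ via $0\le h\le1$ and $(H_0)$; complete continuity from the bounded, continuous Nemytskii map composed with the compact operator $S_\beta$ (the paper applies Arzel\`a--Ascoli to $A(B)$ directly, which amounts to the same thing); and the fixed-point equivalence via uniqueness for \eqref{linear-beta} together with Lemma~\ref{lem:S-cone}.
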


\begin{proof}
We first verify that \(F_u\ge0\).  If \(f(t,u(t))\ge0\), then
\(F_u(t)\ge\beta(t)u(t)\ge0\).  If \(f(t,u(t))<0\), the inequalities
\(0\le h\le1\) and \eqref{H0-shift} give
\[
f(t,u(t))h(u'(t))
\ge f(t,u(t))
\ge-\beta(t)u(t).
\]
Thus \(F_u(t)\ge0\) in both cases, and Lemma~\ref{lem:S-cone} yields
\(Au\in K\).

Let \(B\subset K\) be bounded in \(E\).  Then \(u\) and \(u'\) are uniformly
bounded for \(u\in B\), and the continuity of \(f\), \(h\), and \(\beta\)
implies that \(\{F_u:u\in B\}\) is bounded in \(C(I)\).  Standard estimates
for \eqref{linear-beta} show that \(A(B)\) is bounded in \(C^1(I)\).  Since
\(-(Au)''+\beta(t)Au=F_u\), the family \(\{(Au)':u\in B\}\) is
equicontinuous.  The
Arzel\`a--Ascoli theorem then implies that \(A(B)\) is relatively compact in
\(E\).  Continuity follows from the continuity of the associated Nemytskii
operator and of \(S_\beta\).  Hence \(A\) is completely continuous.

Finally, \(u=Au\) is equivalent to
\[
-u''+\beta(t)u=f(t,u)h(u')+\beta(t)u,
\qquad u'(a)=0,\quad u(b)=0,
\]
which is precisely \eqref{W}.  Conversely, a nonnegative solution of
\eqref{W} satisfies \(u=S_\beta F_u\) and belongs to \(K\) by
Lemma~\ref{lem:S-cone}.
\end{proof}

Let \(c\in C(I,[0,+\infty))\), \(c\not\equiv0\), and consider
\begin{equation}\label{eigen-beta}
\begin{cases}
-e''+\beta(t)e=\lambda c(t)e,&t\in(a,b),\\
e'(a)=0,\qquad e(b)=0.
\end{cases}
\end{equation}
We denote its principal eigenvalue by \(\lambda_1(c)\).  The potential
\(\beta\) is fixed throughout, and is therefore suppressed from the notation.
For the problem with \(\beta\equiv0\), the principal eigenvalue is denoted by
\(\mu_1(c)\).

The following facts are standard consequences of regular Sturm--Liouville
theory and its variational characterization; see \cite{Zettl}.

\begin{lemma}\label{lem:eigen}
The eigenvalue \(\lambda_1(c)\) is positive, simple, and satisfies
\begin{equation}\label{rayleigh}
\lambda_1(c)=
\inf_{\substack{v\in H^1(a,b),\ v(b)=0\\
                 \int_a^bc(t)v(t)^2\dd t>0}}
\frac{\displaystyle\int_a^b
\bigl((v')^2+\beta(t)v^2\bigr)\dd t}
{\displaystyle\int_a^bc(t)v^2\dd t}.
\end{equation}
It has an eigenfunction \(e_c\) such that \(e_c(t)>0\) for \(a\le t<b\),
\(\norm{e_c}_\infty=1\), and \(e_c\in K\).
Furthermore, if \(c_n,c\in C(I,[0,+\infty))\), \(c_n\to c\) uniformly,
and \(c_n\not\equiv0\) for all sufficiently large \(n\), then
\(\lambda_1(c_n)\to\lambda_1(c)\).
The analogous assertions hold for \(\mu_1\).
\end{lemma}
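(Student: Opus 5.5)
The plan is to treat Lemma~\ref{lem:eigen} as a regular Sturm--Liouville statement for the weighted problem \eqref{eigen-beta}, posed on the form domain \(V=\{v\in H^1(a,b):v(b)=0\}\). The quadratic form is
\[
Q(v)=\int_a^b\bigl((v')^2+\beta v^2\bigr)\dd t .
\]
It is coercive on \(V\): indeed \(v(t)=-\int_t^b v'\) gives the Poincar\'e inequality \(\norm{v}_{L^2}\le L\norm{v'}_{L^2}\), and \(\beta\ge0\).

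To find \(\lambda_1(c)\), I would minimize \(Q(v)\) subject to \(\int_a^b c v^2\dd t=1\). A minimizing sequence is bounded in \(H^1\), hence precompact in \(C(I)\) by the compact embedding. So the weighted constraint passes to the limit, and weak lower semicontinuity of \(Q\) gives a minimizer \(e\). Its value \(\lambda_1(c)\) equals the infimum in \eqref{rayleigh}, and it is positive because \(Q(v)>0\) for \(v\ne0\).

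The Euler--Lagrange equation is \(\int (e'\psi'+\beta e\psi)=\lambda_1\int ce\psi\) for all \(\psi\in V\). It gives \(-e''+\beta e=\lambda_1 ce\) weakly. Since the right side is continuous, \(e\in C^2(I)\). The natural boundary condition \(e'(a)=0\) follows because \(\psi(a)\) is free.

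Positivity comes from the fact that \(Q(\abs e)=Q(e)\), so \(\abs e\) is also a minimizer and we may take \(e\ge0\). Then \(e=S_\beta(\lambda_1 c e)\) with \(\lambda_1 ce\ge0\) and \(\lambda_1 ce\not\equiv0\); otherwise \(e\equiv0\) by uniqueness for \eqref{linear-beta}. Lemma~\ref{lem:S-cone} therefore gives both \(e>0\) on \([a,b)\) and \(e\in K\). Normalizing to \(\norm{e}_\infty=1\) keeps it in the cone.

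For simplicity, I would note that any two eigenfunctions for \(\lambda_1\) solve the same second-order ODE with \(e'(a)=0\). So they are proportional by the uniqueness of the initial value problem, which gives geometric simplicity. This is all one needs here; algebraic simplicity is automatic for self-adjoint problems.

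For continuity in \(c\), I would use the Rayleigh quotient.

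\textbf{Upper semicontinuity.} Test \(\lambda_1(c_n)\) with \(e_c\): \(\int c_n e_c^2\to\int c e_c^2>0\), so \(\limsup\lambda_1(c_n)\le\lambda_1(c)\).

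\textbf{Lower semicontinuity.} This is the main obstacle, because the eigenvalues could a priori degenerate. Normalize \(e_{c_n}\) by \(\int c_n e_{c_n}^2=1\). The upper bound just shown gives \(Q(e_{c_n})\) bounded, hence \(e_{c_n}\) bounded in \(H^1\). Along a subsequence, \(e_{c_n}\to w\) uniformly and weakly in \(H^1\). Uniform convergence of \(c_n\) gives \(\int c w^2=1\), so \(w\) is admissible in \eqref{rayleigh}, and \(\lambda_1(c)\le Q(w)\le\liminf Q(e_{c_n})=\liminf\lambda_1(c_n)\).

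Since every subsequence has a further subsequence along which \(\lambda_1(c_n)\to\lambda_1(c)\), the whole sequence converges.

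The same argument with \(\beta\equiv0\) applies verbatim to \(\mu_1\): coercivity still follows from Poincar\'e, and positivity uses the explicit Green function of \(-v''\), which is \(\eta\equiv1\), \(\zeta=b-t\).
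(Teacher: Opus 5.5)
Your proof is correct, but it is organized differently from the paper's. The paper does not derive the spectral facts or \eqref{rayleigh}: it cites regular Sturm--Liouville theory and only checks \(e_c\in K\), via \(e_c=\lambda_1(c)S_\beta(ce_c)\) and Lemma~\ref{lem:S-cone}, exactly as you do. Your direct minimization makes this part self-contained. One line is worth adding: the minimum value really is the principal (smallest) eigenvalue of \eqref{eigen-beta}. For any eigenpair \((\lambda,\phi)\), testing with \(\phi\) gives \(Q(\phi)=\lambda\int_a^bc\phi^2\dd t\). Since \(Q(\phi)>0\), this forces \(\int_a^bc\phi^2\dd t>0\) and hence \(\lambda\ge\lambda_1(c)\).

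For continuity, the paper only shows that \(\{\lambda_1(c_n)\}\) is bounded above and away from zero. It then normalizes \(\norm{e_n}_{L^2}=1\) and passes to the limit in the eigenvalue equation, obtaining an eigenpair \((\bar\lambda,e)\) with \(e>0\) on \([a,b)\). Finally it identifies \(\bar\lambda=\lambda_1(c)\) by quoting that the principal eigenvalue is the only eigenvalue with a nonnegative eigenfunction. You instead get \(\limsup\lambda_1(c_n)\le\lambda_1(c)\) by testing with \(e_c\). You get \(\liminf\lambda_1(c_n)\ge\lambda_1(c)\) from weak lower semicontinuity of \(Q\) and the admissibility of the uniform limit \(w\).

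Your route uses nothing beyond the Rayleigh characterization and avoids the positive-eigenfunction uniqueness result, which the paper quotes without proof. The paper's route shows directly that the normalized eigenfunctions converge to a principal eigenfunction. Your inequalities give this too: they force \(Q(w)=\lambda_1(c)\) with \(\int_a^bcw^2\dd t=1\). So \(w\) is a minimizer, hence an eigenfunction, and by your simplicity argument a multiple of \(e_c\).
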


\begin{proof}
The spectral assertions and \eqref{rayleigh} follow from the standard theory
cited above.  Since \(e_c=\lambda_1(c)S_\beta(ce_c)\) and \(ce_c\ge0\),
Lemma~\ref{lem:S-cone} gives \(e_c\in K\).

For completeness, we give the compactness argument for the continuity
statement.  Choose \(v\in H^1(a,b)\), \(v(b)=0\), such that
\(\int_a^bcv^2\dd t>0\).  Uniform convergence implies
\(\int_a^bc_nv^2\dd t>0\) for all sufficiently large \(n\), and
\eqref{rayleigh}, with this fixed test function, shows that
\(\{\lambda_1(c_n)\}\) is bounded above.  The Poincar\'e inequality for
functions vanishing at \(b\), together with the uniform boundedness of
\(\{c_n\}\), also gives a positive lower bound.  Normalize the positive
eigenfunction \(e_n\) by \(\norm{e_n}_{L^2}=1\).  Testing its equation by
\(e_n\) now shows that \(\{e_n\}\) is bounded in \(H^1(a,b)\).  Hence, after
passing to a subsequence,
\[
e_n\rightharpoonup e\quad\hbox{in }H^1(a,b),
\qquad
e_n\longrightarrow e\quad\hbox{uniformly on }I,
\]
and the corresponding eigenvalues converge to some \(\bar\lambda>0\).
Passing to the weak eigenvalue equation gives
\[
-e''+\beta(t)e=\bar\lambda c(t)e,
\qquad e'(a)=0,\quad e(b)=0.
\]
Moreover, \(e\ge0\) and \(\norm{e}_{L^2}=1\).  If \(e\) vanished at a point
of \([a,b)\), its nonnegativity would give zero Cauchy data there, and
uniqueness for the initial value problem would imply \(e\equiv0\), a
contradiction.  Thus \(e>0\) on \([a,b)\).  By the Sturm oscillation
characterization, the principal eigenvalue is the unique eigenvalue admitting
a nonnegative nonzero eigenfunction; consequently
\(\bar\lambda=\lambda_1(c)\).  The argument applies to every subsequence, so
the whole sequence converges to \(\lambda_1(c)\).
\end{proof}

The Rayleigh quotient also gives, for every \(\kappa>0\),
\begin{equation}\label{eigen-scaling}
\lambda_1(\kappa c)=\frac{1}{\kappa}\lambda_1(c),
\qquad
\mu_1(\kappa c)=\frac{1}{\kappa}\mu_1(c).
\end{equation}

We shall use the following standard fixed point index properties; see
\cite{GuoLak}.

\begin{lemma}\label{lem:index}
Let \(K\) be a cone in a Banach space \(E\), let \(\Omega\subset E\) be a
bounded open set with \(0\in\Omega\), and let
\(T:K\cap\overline\Omega\to K\) be completely continuous.
\begin{enumerate}
\item If
\[
\mu Tu\ne u,
\qquad u\in K\cap\partial\Omega,\quad 0<\mu\le1,
\]
then \(i(T,K\cap\Omega,K)=1\).
\item If there exists \(e\in K\setminus\{0\}\) such that
\[
u-Tu\ne\tau e,
\qquad u\in K\cap\partial\Omega,\quad \tau\ge0,
\]
then \(i(T,K\cap\Omega,K)=0\).
\end{enumerate}
\end{lemma}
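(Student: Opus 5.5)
Both assertions are the standard consequences of the axioms of the fixed point index for completely continuous maps on retracts of Banach spaces \cite{GuoLak}: normalization, homotopy invariance, and the solution (existence) property. We also need two elementary facts. First, since \(\Omega\) is bounded, \(K\cap\overline\Omega\) is bounded, say \(\norm{u}\le R\) there. Second, since \(T\) is completely continuous, \(T(K\cap\overline\Omega)\) is relatively compact and hence bounded, say \(\norm{Tu}\le M\) there.

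For part (1), we use the homotopy \(H(\mu,u)=\mu Tu\) for \(\mu\in[0,1]\) and \(u\in K\cap\overline\Omega\).
\begin{enumerate}
\item \(H\) is completely continuous jointly in \((\mu,u)\), being the product of a bounded scalar with the compact map \(T\), and it takes values in \(K\) because \(K\) is a cone.
\item For \(0<\mu\le1\), the hypothesis excludes fixed points of \(H(\mu,\cdot)\) on \(K\cap\partial\Omega\).
\item For \(\mu=0\), the only fixed point of \(H(0,\cdot)\) is \(u=0\). Since \(0\in\Omega\) and \(\Omega\) is open, \(0\notin\partial\Omega\), so there is no fixed point on the boundary.
\item Homotopy invariance then gives \(i(T,K\cap\Omega,K)=i(0,K\cap\Omega,K)\).
\item The constant map \(0\) takes values in \(K\cap\Omega\), so the normalization property gives \(i(0,K\cap\Omega,K)=1\).
\end{enumerate}

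For part (2), we use \(H(\tau,u)=Tu+\tau e\) for \(\tau\in[0,\tau_0]\), with \(\tau_0\) to be chosen.
\begin{enumerate}
\item \(H\) maps into \(K\), because \(e\in K\) and \(K\) is a convex cone, and it is completely continuous.
\item A fixed point \(u\in K\cap\partial\Omega\) of \(H(\tau,\cdot)\) would satisfy \(u-Tu=\tau e\) with \(\tau\ge0\). The hypothesis forbids this, so the homotopy is admissible.
\item Choose \(\tau_0>(R+M)/\norm{e}\), which is possible since \(e\ne0\).
\item If \(u\in K\cap\overline\Omega\) satisfied \(u=Tu+\tau_0e\), then \(R\ge\norm{u}\ge\tau_0\norm{e}-M>R\), a contradiction. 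Hence \(H(\tau_0,\cdot)\) has no fixed points in \(K\cap\overline\Omega\).
\item By the solution property (a nonzero index forces a fixed point), \(i(H(\tau_0,\cdot),K\cap\Omega,K)=0\).
\item Homotopy invariance transfers this to \(\tau=0\), giving \(i(T,K\cap\Omega,K)=0\).
\end{enumerate}

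The only point requiring care is the choice of \(\tau_0\) in part (2). This relies on the a priori bound \(M\), which comes from the compactness of \(T\) on the bounded set \(K\cap\overline\Omega\), and on \(e\ne0\). Everything else is a direct appeal to the index axioms.
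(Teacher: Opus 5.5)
Your proof is correct and follows the standard route. The paper itself gives no proof, only a citation to Guo and Lakshmikantham, where both properties are derived as you do: for (1), the homotopy \(\mu T\) to the zero map together with normalization; for (2), the homotopy \(T+\tau e\), pushed to a large \(\tau_0\) at which there are no fixed points in \(K\cap\overline\Omega\), together with the solution property.
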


\section{The main result}\label{sec:main}

For \(r>0\), put
\begin{equation}\label{Mr}
M_r=\max_{\substack{t\in I\\0\le s\le r}}\abs{f(t,s)}.
\end{equation}
In addition to \((H_0)\), we impose the following hypotheses.

\medskip
\noindent\textbf{\((H_1)\)}
There exist \(\sigma>0\) and
\(b_0\in C(I,[0,+\infty))\), \(b_0\not\equiv0\), such that
\begin{equation}\label{H1-eigen}
\lambda_1(b_0)<1
\end{equation}
and
\begin{equation}\label{H1-lower}
f(t,s)+\beta(t)s\ge b_0(t)s,
\qquad (t,s)\in I\times[0,\sigma].
\end{equation}

\medskip
\noindent\textbf{\((H_2)\)}
There exist \(\eta>0\) and
\(b_\infty\in C(I,[0,+\infty))\), \(b_\infty\not\equiv0\), such that
\begin{equation}\label{H2-upper}
f(t,s)\le b_\infty(t)s,
\qquad (t,s)\in I\times[\eta,+\infty),
\end{equation}
and
\begin{equation}\label{H2-eigen}
\mu_1(b_\infty)>1.
\end{equation}

Condition \((H_1)\) controls the shifted nonlinearity only near zero.  In
particular, \(b_0\) may vanish on subintervals, where \eqref{H1-lower}
reduces to \((H_0)\) and \(f\) may remain negative.  Condition \((H_2)\)
provides an upper comparison only for large amplitudes; the bounded
intermediate range is handled by continuity.

\begin{theorem}\label{thm:main}
Assume \((H_0)\)--\((H_2)\).  Then problem \eqref{P} has at least one
positive solution \(u\), and \(\norm{u'}_\infty<1\).
\end{theorem}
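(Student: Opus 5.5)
We compute the fixed point index of the operator \(A\) of Proposition~\ref{prop:A} on two nested rectangular neighbourhoods of \(0\) in \(E\):
\[
\Omega_{r}=\{u\in E:\ \norm{u}_\infty<r,\ \norm{u'}_\infty<\rho\},
\]
with suitable \(\rho\). We aim for index \(0\) for small \(r\) (expansion) and index \(1\) for large \(r\) (compression). Additivity then gives a fixed point \(u\in K\) in the annular region. By Proposition~\ref{prop:A} this \(u\) is a nonnegative solution of \eqref{W}. Lemma~\ref{lem:equivalence} shows that \(u\) solves \eqref{P} with \(\norm{u'}_\infty<1\). Finally, \(u\ne0\) and \(F_u\ge0\), so \(F_u\not\equiv0\), and Lemma~\ref{lem:S-cone} gives \(u>0\) on \([a,b)\).

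\emph{Step 1 (index \(0\) near zero).} Take \(\Omega=\{\norm{u}_\infty<r,\ \norm{u'}_\infty<\rho\}\) with \(r\le\sigma\) and \(\rho<1\) small. Choose \(\epsilon>0\) such that \(\lambda_1((1-\epsilon)b_0)<1\), using \eqref{eigen-scaling}. Then fix \(\rho\) so that \(h(p)\ge1-\epsilon'\) for \(\abs p\le\rho\). Write \(f h+\beta u=h(f+\beta u)+(1-h)\beta u\ge h\,b_0u\ge(1-\epsilon')b_0u\).

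Suppose \(u-Au=\tau e\) on \(\partial\Omega\cap K\), with \(e=e_c\) the eigenfunction for \(c=(1-\epsilon')b_0\). Then \(-u''+\beta u\ge c\,u+\tau(\lambda_1(c)-\ldots)\), and more precisely \(u-\tau e=Au\ge S_\beta(cu)\). Multiply the equation by \(e\) and integrate (Green's identity, using the boundary conditions). This yields
\[
\lambda_1(c)\int ceu\ge\int ceu+\tau\cdot(\text{positive}).
\]
Since \(\int ceu>0\) because \(u\in K\setminus\{0\}\), this forces \(\lambda_1(c)\ge1\), a contradiction. Lemma~\ref{lem:index}(2) then gives index \(0\).

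A subtlety is that on \(\partial\Omega\) we may have \(\norm{u'}_\infty=\rho\) while \(\norm{u}_\infty<r\). We therefore first pick \(\rho\) and then take \(r\) much smaller. We must check that \(u-Au=\tau e\) cannot place \(u\) on the derivative face. One way is to note \(\norm{u'}_\infty\le C\norm{u}_\infty\) for such \(u\), since \(-u''=fh-\tau\lambda_1 c e+\ldots\). Alternatively, shrink \(\Omega\) to be just \(\{\norm u_\infty<r\}\cap\{\norm{u'}_\infty<1\}\) and use the a priori slope bound from \(\ph^{-1}\) as in Lemma~\ref{lem:equivalence}. I expect this boundary bookkeeping to need care.

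\emph{Step 2 (index \(1\) at infinity).} Take \(\Omega_R=\{\norm{u}_\infty<R,\ \norm{u'}_\infty<2\}\). On the face \(\norm{u'}_\infty=2\), a solution of \(u=\mu Au\) satisfies \(-u''=\mu fh+(\mu-1)\beta u\). The set where \(\abs{u'}>1\) has \(h=0\). A first-contact argument like the one in Lemma~\ref{lem:equivalence} then bounds \(\abs{u'}\) by a constant; the \((\mu-1)\beta u\) term has a sign to be handled, so we use \(u'(a)=0\) and integrate. This shows \(\norm{u'}_\infty<2\) once \(R\) is controlled, so the derivative face is excluded.

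For the face \(\norm u_\infty=R\), suppose \(u=\mu Au\). Then
\[
-u''\le \mu b_\infty u+\mu C_\eta+(\mu-1)\beta u\le b_\infty u+C,
\]
where \(C\) bounds \(f\) on \([0,\eta]\), using \eqref{H2-upper}, \(h\le1\), and \(f\le0\cdot\) when \(f<0\). Test against the \(\beta\equiv0\) eigenfunction for \(\mu_1((1+\epsilon)b_\infty)>1\), or use the Rayleigh quotient with \(v=u\):
\[
\int(u')^2\le\int b_\infty u^2+C\int u.
\]
Combined with \eqref{rayleigh} for \(\mu_1\), this gives \((\mu_1(b_\infty)-1)\int b_\infty u^2\le C\int u\).

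The difficulty is that \(b_\infty\) may vanish somewhere. To handle this, use \(u\in K\), so that \(u\ge\Gamma_\beta R\). Together with \(\int(u')^2\ge cR^2\) (from \(u(b)=0\) and \(\norm u_\infty=R\)), this yields \(R^2\lesssim R\). Hence \(R\) is bounded, so large \(R\) is excluded, and Lemma~\ref{lem:index}(1) gives index \(1\).

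The main obstacle is making the derivative face harmless in both steps. I would first try proving a uniform a priori bound \(\norm{u'}_\infty<1\) for all \(u=\mu Au\) (and for \(u-Au=\tau e\)) via the \(\ph^{-1}\) representation, so that only the \(\norm u_\infty\) face matters.
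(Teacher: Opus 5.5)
\textbf{Gap: the outer derivative face.} Your architecture matches the paper's: index $0$ on a small rectangle via Lemma~\ref{lem:index}(2) and a weighted principal eigenfunction, index $1$ on a large rectangle, then additivity, Proposition~\ref{prop:A}, Lemma~\ref{lem:equivalence} and Lemma~\ref{lem:S-cone}. Your treatment of the amplitude face $\norm{u}_\infty=R$ is a valid alternative to the paper's test against $e_\infty$: you test $-u''\le b_\infty u+C$ with $u$ and use the Rayleigh quotient for $\mu_1$. Combined with $\int_a^b(u')^2\ge R^2/L$, it needs only $u\ge0$, not $u\ge\Gamma_\beta\norm{u}_\infty$.

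The genuine gap is the derivative face $\norm{u'}_\infty=2$ of the outer set, which you leave open. Integrating $-u''=\mu f(t,u)h(u')-(1-\mu)\beta(t)u$ from $a$ only gives $\norm{u'}_\infty\le L\bigl(M_R+\norm{\beta}_\infty R\bigr)$, and this grows with $R$. So ``$<2$ once $R$ is controlled'' is circular, because $R$ must be large for the amplitude face. The $\ph^{-1}$ first-contact argument of Lemma~\ref{lem:equivalence} does not transfer when $\mu<1$ either. Before the first contact one has $-(\ph(u'))'=\mu f(t,u)-(1-\mu)\beta(t)u\,(1-(u')^2)^{-3/2}$, and the last term is unbounded as $\abs{u'}\to1$. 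As written, Lemma~\ref{lem:index}(1) is therefore not justified.

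The paper's fix is to choose the slope bound \emph{after} the amplitude. Since $A$ is defined on all of $K$ (this is what $h$ is for), the outer set may be $\{\norm{u}_\infty<R,\ \norm{u'}_\infty<D\}$ with $D>L\bigl(M_R+\norm{\beta}_\infty R\bigr)$. The integration estimate then excludes the derivative face. The amplitude-face argument is unaffected, because $f(t,s)h(p)\le b_\infty(t)s+C_\eta$ holds for every $p\in\R$.

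Your fixed bound $2$ can also be rescued by a sign argument. On $\{\abs{u'}>1\}$ one has $h(u')=0$, hence $u''=(1-\mu)\beta u\ge0$. Together with $u'(a)=0$ this keeps $u'\ge-1$. If $u'(t_2)>1$, then $u'>1$ on all of $[t_2,b]$, which is incompatible with $u\ge0$ and $u(b)=0$. Hence $\norm{u'}_\infty\le1$ for every $u=\mu Au$ in $K$.

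\textbf{Smaller repair in Step~1.} The estimate $\norm{u'}_\infty\le C\norm{u}_\infty$ is not available. $f$ is only continuous with $f(t,0)=0$, so there is no linear bound near $0$ (think of $f\sim\sqrt{s}$). You also need to control $\tau$, which you do not mention.

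What works, and is the paper's choice \eqref{rho-choice}, goes as follows. From $u=Au+\tau e$, $Au\ge0$ and $\norm{e}_\infty=1$ one gets $\tau\le\norm{u}_\infty\le r$. Then $-u''=f(t,u)h(u')+\tau\lambda_1(c)c(t)e(t)$ (plus sign, not minus) gives $\norm{u'}_\infty\le L\bigl(M_r+r\lambda_1(c)\norm{c}_\infty\bigr)$. This is below your slope bound $\rho$ once $r$ is small, because $M_r\to0$.

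Your fallback of taking slope bound $1$ would destroy the lower bound $h(u')\ge1-\epsilon$ needed on the amplitude face, so it does not help. With these two repairs, the rest of your argument goes through as you describe: a nonzero fixed point in the annulus, $F_u\not\equiv0$, and positivity on $[a,b)$.
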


\begin{proof}
We divide the proof into three steps.  Set
\(\lambda_*=\lambda_1(b_0)<1\).  Since
\(c_q:=(1-q^2)^{3/2}\to1\) as \(q\to0^+\),
we may choose \(q\in(0,1)\) so that \(c_q>\lambda_*\).  Define
\begin{equation}\label{a0-choice}
a_0=c_qb_0,
\qquad
\lambda_0=\lambda_1(a_0)
=\frac{\lambda_*}{c_q}<1,
\end{equation}
where \eqref{eigen-scaling} was used.  Since \(f(t,0)=0\) and \(f\) is
uniformly continuous on compact sets, \(M_r\to0\) as \(r\to0^+\).
We can therefore choose \(\rho\in(0,\sigma)\) such that
\begin{equation}\label{rho-choice}
L\left(M_\rho+
\rho\lambda_0\norm{a_0}_\infty\right)<q.
\end{equation}

\smallskip
\noindent\textit{Step 1: expansion near the origin.}
Let
\begin{equation}\label{Omega-small}
\Omega_{\rho,q}=\left\{u\in E:
\norm{u}_\infty<\rho,
\quad \norm{u'}_\infty<q\right\}.
\end{equation}
This is a bounded open subset of \(E\) containing the origin.  Let
\(e_0=e_{a_0}\) be the positive principal eigenfunction given by
Lemma~\ref{lem:eigen}; thus
\begin{equation}\label{e0-equation}
-e_0''+\beta(t)e_0=\lambda_0a_0(t)e_0,
\quad e_0'(a)=0,\quad e_0(b)=0,\quad
\norm{e_0}_\infty=1,
\end{equation}
and \(e_0\in K\setminus\{0\}\).  We claim that
\begin{equation}\label{small-no-translate}
u-Au\ne\tau e_0,
\qquad
u\in K\cap\partial\Omega_{\rho,q},\quad \tau\ge0.
\end{equation}
Suppose to the contrary that
\begin{equation}\label{small-translate}
u=Au+\tau e_0
\end{equation}
for some \(u\in K\cap\partial\Omega_{\rho,q}\) and \(\tau\ge0\).  Since
\(Au\ge0\), \(e_0\ge0\), and \(\norm{e_0}_\infty=1\),
\begin{equation}\label{tau-bound}
\tau\le\norm{u}_\infty\le\rho.
\end{equation}
Moreover, \(Au,e_0\in C^2(I)\) and both satisfy the mixed boundary
conditions.  Hence \eqref{small-translate} implies that \(u\in C^2(I)\),
\(u'(a)=0\), and \(u(b)=0\).

If \(\norm{u'}_\infty=q\) and \(\norm{u}_\infty\le\rho\), applying
\(-d^2/dt^2+\beta(t)\) to \eqref{small-translate} and using
\eqref{A-def} and \eqref{e0-equation} gives
\[
-u''=f(t,u)h(u')+\tau\lambda_0a_0(t)e_0(t).
\]
Because \(u'(a)=0\), \eqref{Mr}, \eqref{tau-bound}, and
\(0\le h,e_0\le1\) yield
\[
\begin{aligned}
\norm{u'}_\infty
&\le L\left(M_\rho+
\tau\lambda_0\norm{a_0}_\infty\right)\\
&\le L\left(M_\rho+
\rho\lambda_0\norm{a_0}_\infty\right)<q,
\end{aligned}
\]
contrary to \(\norm{u'}_\infty=q\).

It remains to exclude the case
\(\norm{u}_\infty=\rho\) and \(\norm{u'}_\infty\le q\).  Here
\(0\le u(t)\le\rho<\sigma\) and \(h(u'(t))\ge c_q\) for \(t\in I\).
Using \eqref{H1-lower}, \(\beta u\ge0\), and \(0\le h\le1\), we obtain
pointwise
\begin{align*}
F_u
&=f(t,u)h(u')+\beta(t)u\\
&=h(u')\bigl(f(t,u)+\beta(t)u\bigr)
+\bigl(1-h(u')\bigr)\beta(t)u\\
&\ge c_qb_0(t)u(t)=a_0(t)u(t).
\end{align*}
Apply \(-d^2/dt^2+\beta(t)\) to \eqref{small-translate}, multiply the
result by \(e_0\), and integrate over \(I\).  Both \(u\) and \(e_0\) satisfy
the same mixed boundary conditions, so all boundary terms vanish.  We find
\begin{align}
\lambda_0\int_a^ba_0(t)u(t)e_0(t)\dd t
&=\int_a^bF_u(t)e_0(t)\dd t
+\tau\lambda_0\int_a^ba_0(t)e_0(t)^2\dd t \notag\\
&\ge\int_a^ba_0(t)u(t)e_0(t)\dd t
+\tau\lambda_0\int_a^ba_0(t)e_0(t)^2\dd t.
\label{small-eigen-contradiction}
\end{align}
Since \(u\in K\) and \(\norm{u}_\infty=\rho>0\),
\eqref{cone} together with \eqref{Gamma-positive} shows that
\(u(t)>0\) on \([a,b)\).
As \(a_0\not\equiv0\) and \(e_0>0\) on \([a,b)\),
\[
\int_a^ba_0(t)u(t)e_0(t)\dd t>0.
\]
This contradicts \eqref{small-eigen-contradiction} because
\(\lambda_0<1\).  Hence \eqref{small-no-translate} holds, and
Lemma~\ref{lem:index}(2) gives
\begin{equation}\label{small-index-zero}
i(A,K\cap\Omega_{\rho,q},K)=0.
\end{equation}

\smallskip
\noindent\textit{Step 2: compression at infinity.}
Let \(f^+(t,s)=\max\{f(t,s),0\}\) and define
\begin{equation}\label{C-eta}
C_\eta=
\max_{\substack{t\in I\\0\le s\le\eta}}f^+(t,s)<+\infty.
\end{equation}
Hypothesis \((H_2)\) and \(0\le h\le1\) imply
\begin{equation}\label{global-fh-upper}
f(t,s)h(p)\le b_\infty(t)s+C_\eta
\end{equation}
for every \((t,s,p)\in I\times[0,+\infty)\times\R\).  Indeed, for
\(0\le s\le\eta\) the left-hand side is at most \(f^+(t,s)\).  If
\(s\ge\eta\), the assertion follows from \eqref{H2-upper} when \(f\ge0\)
and is immediate when \(f<0\).

Write \(\lambda_\infty=\mu_1(b_\infty)>1\), and choose a corresponding
positive eigenfunction \(e_\infty\), so that
\[
-e_\infty''=\lambda_\infty b_\infty(t)e_\infty,
\qquad e_\infty'(a)=0,\quad e_\infty(b)=0.
\]
By \eqref{Gamma-positive}, the nontriviality of \(b_\infty\), and the
positivity of \(e_\infty\),
\begin{equation}\label{outer-positive-constants}
\mathcal M_\infty:=
\int_a^bb_\infty(t)\Gamma_\beta(t)e_\infty(t)\dd t>0,
\qquad
E_\infty:=\int_a^be_\infty(t)\dd t>0.
\end{equation}
Choose \(R\) so large that
\begin{equation}\label{R-choice}
R>\max\left\{
\rho,\eta,
\frac{C_\eta E_\infty}
{(\lambda_\infty-1)\mathcal M_\infty}
\right\},
\end{equation}
and then choose
\begin{equation}\label{D-choice}
D>\max\left\{
q,
L\bigl(M_R+\norm{\beta}_\infty R\bigr)
\right\}.
\end{equation}
Define
\begin{equation}\label{Omega-large}
\Omega_{R,D}=\left\{u\in E:
\norm{u}_\infty<R,
\quad \norm{u'}_\infty<D\right\}.
\end{equation}
Then
\(\overline{\Omega_{\rho,q}}\subset\Omega_{R,D}\).  We claim that
\begin{equation}\label{large-no-homotopy}
\mu Au\ne u,
\qquad
u\in K\cap\partial\Omega_{R,D},\quad 0<\mu\le1.
\end{equation}
Assume instead that \(u=\mu Au\) for some admissible \(u\) and \(\mu\).
Since \(Au\in C^2(I)\) satisfies the mixed boundary conditions, so does
\(u\).  Applying \(-d^2/dt^2+\beta(t)\) to the homotopy identity gives
\begin{equation}\label{large-differential}
-u''=\mu f(t,u)h(u')-(1-\mu)\beta(t)u,
\qquad u'(a)=0,\quad u(b)=0.
\end{equation}

If \(\norm{u'}_\infty=D\) and \(\norm{u}_\infty\le R\), integration of
\eqref{large-differential} from \(a\), together with \eqref{D-choice}, gives
\[
\norm{u'}_\infty
\le L\bigl(M_R+\norm{\beta}_\infty R\bigr)<D,
\]
a contradiction.

Suppose now that \(\norm{u}_\infty=R\) and
\(\norm{u'}_\infty\le D\).  From \eqref{large-differential},
\eqref{global-fh-upper}, \(0<\mu\le1\), and \(\beta u\ge0\), we have
\begin{equation}\label{u-differential-upper}
-u''\le b_\infty(t)u+C_\eta.
\end{equation}
Multiplying by \(e_\infty\), integrating over \(I\), and using the mixed
boundary conditions to eliminate the boundary terms, we obtain
\begin{equation}\label{large-eigen-contradiction}
(\lambda_\infty-1)
\int_a^bb_\infty(t)u(t)e_\infty(t)\dd t
\le C_\eta E_\infty.
\end{equation}
On the other hand, \(u\in K\) and \(\norm{u}_\infty=R\) imply
\[
\int_a^bb_\infty(t)u(t)e_\infty(t)\dd t
\ge R\mathcal M_\infty.
\]
Substitution into \eqref{large-eigen-contradiction} contradicts
\eqref{R-choice}.  Thus \eqref{large-no-homotopy} is proved, and
Lemma~\ref{lem:index}(1) yields
\begin{equation}\label{large-index-one}
i(A,K\cap\Omega_{R,D},K)=1.
\end{equation}

\smallskip
\noindent\textit{Step 3: a positive solution of the original problem.}
By additivity of the fixed point index and
\eqref{small-index-zero}--\eqref{large-index-one},
\begin{align*}
&i\left(A,
K\cap\bigl(\Omega_{R,D}\setminus
\overline{\Omega_{\rho,q}}\bigr),K\right)\\
&\qquad=
i(A,K\cap\Omega_{R,D},K)
-i(A,K\cap\Omega_{\rho,q},K)=1.
\end{align*}
Therefore \(A\) has a fixed point
\(u\in K\cap(\Omega_{R,D}\setminus\overline{\Omega_{\rho,q}})\).
This fixed point is nonzero.  In fact,
\begin{equation}\label{fixed-point-amplitude}
\rho<\norm{u}_\infty<R.
\end{equation}
To see this, suppose that \(\norm{u}_\infty\le\rho\).  Since \(u=Au\),
Proposition~\ref{prop:A} gives \(-u''=f(t,u)h(u')\) and \(u'(a)=0\).
Consequently,
\[
\norm{u'}_\infty\le LM_\rho
<L\left(M_\rho+
\rho\lambda_0\norm{a_0}_\infty\right)<q,
\]
and hence \(u\in\overline{\Omega_{\rho,q}}\), a contradiction.

By Proposition~\ref{prop:A}, \(u\) is a nonnegative solution of
\eqref{W}.  Lemma~\ref{lem:equivalence} then shows that \(u\) solves
\eqref{P} and satisfies \(\norm{u'}_\infty<1\).  Finally, if
\(F_u\equiv0\), then \(u=S_\beta F_u\equiv0\), which is impossible.
Thus \(F_u\ge0\), \(F_u\not\equiv0\), and \eqref{green-positive} gives
\[
u(t)=\int_a^bG_\beta(t,s)F_u(s)\dd s>0,
\qquad a\le t<b.
\]
Together with \(u(b)=0\), this proves that \(u\) is a positive solution of
\eqref{P}.
\end{proof}

\section{A verifiable asymptotic criterion}\label{sec:asymptotic}

The comparison functions in \((H_1)\) and \((H_2)\) can be chosen directly
from uniform asymptotic slopes.  We use the following conditions.

\medskip
\noindent\textbf{\((L_0)\)}
There exists \(m_0\in C(I,[0,+\infty))\), \(m_0\not\equiv0\), such that
\begin{equation}\label{L0-uniform}
\lim_{s\to0^+}\max_{t\in I}
\left|
\frac{f(t,s)+\beta(t)s}{s}-m_0(t)
\right|=0
\end{equation}
and
\begin{equation}\label{L0-eigen}
\lambda_1(m_0)<1.
\end{equation}

\medskip
\noindent\textbf{\((L_\infty)\)}
There exists \(m_\infty\in C(I,\R)\) such that
\begin{equation}\label{Linf-upper}
\limsup_{s\to+\infty}\max_{t\in I}
\left[\frac{f(t,s)}{s}-m_\infty(t)\right]\le0
\end{equation}
and
\begin{equation}\label{Linf-eigen}
\mu_1(m_\infty^+)>1,
\qquad
m_\infty^+(t)=\max\{m_\infty(t),0\}.
\end{equation}
If \(m_\infty^+\equiv0\), we set \(\mu_1(m_\infty^+)=+\infty\).

\begin{corollary}\label{cor:asymptotic}
Assume \((H_0)\), \((L_0)\), and \((L_\infty)\).  Then problem
\eqref{P} has at least one positive solution \(u\), and
\(\norm{u'}_\infty<1\).
\end{corollary}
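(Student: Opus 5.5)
The plan is to reduce the corollary to Theorem~\ref{thm:main} by constructing comparison functions \(b_0\) and \(b_\infty\) satisfying \((H_1)\) and \((H_2)\). The tools are the continuity of principal eigenvalues under uniform convergence (Lemma~\ref{lem:eigen}) and the scaling identity \eqref{eigen-scaling}.

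\emph{Verification of \((H_1)\).} Since \(\lambda_1(m_0)<1\), I will pick \(\varepsilon\in(0,1)\) with \(\lambda_1(m_0)/(1-\varepsilon)<1\). Set \(b_0=(1-\varepsilon)m_0\). The difficulty is that \eqref{L0-uniform} only gives
\[
f(t,s)+\beta(t)s\ge (m_0(t)-\delta)s
\]
for \(0<s\le\sigma\), where \(\delta\) can be made small. This does not bound \(b_0(t)s\) from below where \(m_0\) is small but positive.

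To fix this, I will choose instead \(b_0=(m_0-\delta)^+\). This function lies in \(C(I,[0,+\infty))\), and it is nontrivial once \(\delta<\norm{m_0}_\infty\). For \(\delta\to0^+\) we have \((m_0-\delta)^+\to m_0\) uniformly, so Lemma~\ref{lem:eigen} gives \(\lambda_1((m_0-\delta)^+)\to\lambda_1(m_0)<1\). Hence \(\lambda_1(b_0)<1\) for small \(\delta\). Fix such a \(\delta\), and then take \(\sigma\) from the uniform limit.

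The lower bound must then be checked in two cases. Where \(m_0(t)\ge\delta\), it follows directly from the displayed inequality. Where \(m_0(t)<\delta\), we have \(b_0(t)=0\), and the inequality reduces to \((H_0)\). At \(s=0\) it holds because \(f(t,0)=0\).

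\emph{Verification of \((H_2)\).} The construction depends on whether \(m_\infty^+\) vanishes.

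\textbf{Case \(m_\infty^+\not\equiv0\).} Take \(b_\infty=m_\infty^++\delta\) with \(\delta>0\) small. This function is positive, and it converges uniformly to \(m_\infty^+\) as \(\delta\to0\), so Lemma~\ref{lem:eigen} (the analogous \(\mu_1\) version) gives \(\mu_1(b_\infty)>1\) for small \(\delta\). By \eqref{Linf-upper} there is \(\eta>0\) such that \(f(t,s)/s\le m_\infty(t)+\delta\le b_\infty(t)\) for \(s\ge\eta\).

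\textbf{Case \(m_\infty^+\equiv0\).} Take \(b_\infty\equiv\delta\) constant. By \eqref{eigen-scaling}, \(\mu_1(\delta)=\mu_1(1)/\delta\), which exceeds \(1\) for \(\delta\) small. The upper bound again follows from \eqref{Linf-upper} with that \(\delta\).

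With \((H_0)\)--\((H_2)\) in hand, Theorem~\ref{thm:main} yields the positive solution with \(\norm{u'}_\infty<1\).

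The only delicate point is the \((H_1)\) construction near the zero set of \(m_0\), which the choice \(b_0=(m_0-\delta)^+\) resolves. Adding \(\delta\) in \((H_2)\) serves the same purpose there: it ensures \(b_\infty\not\equiv0\) and handles the points where \(m_\infty\) is negative.
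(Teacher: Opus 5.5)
Your proposal is correct and follows the paper's proof essentially step for step. You use the same truncation \(b_0=(m_0-\delta)^+\), with eigenvalue continuity and \((H_0)\) covering the set where it vanishes, and the same \(b_\infty=m_\infty^++\delta\), handled by continuity of \(\mu_1\) when \(m_\infty^+\not\equiv0\) and by the scaling identity when \(m_\infty^+\equiv0\).
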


\begin{proof}
For \(\delta>0\), define
\(b_{0,\delta}(t)=\bigl(m_0(t)-\delta\bigr)^+\).
As \(\delta\to0^+\), \(b_{0,\delta}\to m_0\) uniformly on \(I\).  By
\eqref{L0-eigen} and Lemma~\ref{lem:eigen}, one may choose
\(\delta_0>0\) such that
\begin{equation}\label{b0-delta-eigen}
b_{0,\delta_0}\not\equiv0,
\qquad \lambda_1(b_{0,\delta_0})<1.
\end{equation}
The uniform limit \eqref{L0-uniform} gives \(\sigma>0\) such that
\((f(t,s)+\beta(t)s)/s\ge m_0(t)-\delta_0\) for \(t\in I\) and
\(0<s\le\sigma\).  On the other hand, \((H_0)\)
implies that the quotient on the left is nonnegative.  Hence
\[
f(t,s)+\beta(t)s
\ge b_{0,\delta_0}(t)s,
\qquad (t,s)\in I\times[0,\sigma],
\]
where the assertion at \(s=0\) follows from \(f(t,0)=0\).  Thus
\((H_1)\) holds.

Suppose first that \(m_\infty^+\not\equiv0\).  By
\eqref{Linf-eigen} and Lemma~\ref{lem:eigen}, there exists a sufficiently
small \(\delta_\infty>0\) such that
\(\mu_1(m_\infty^++\delta_\infty)>1\).  If
\(m_\infty^+\equiv0\), the scaling relation
\(\mu_1(\delta)=\mu_1(1)/\delta\to+\infty\) as \(\delta\to0^+\) gives the
same conclusion for some \(\delta_\infty>0\).  Set
\(b_\infty(t)=m_\infty^+(t)+\delta_\infty\).
By \eqref{Linf-upper}, there exists \(\eta>0\) such that, for
\(t\in I\) and \(s\ge\eta\),
\[
\frac{f(t,s)}s
\le m_\infty(t)+\delta_\infty
\le b_\infty(t).
\]
Thus \((H_2)\) holds.  The conclusion follows from
Theorem~\ref{thm:main}.
\end{proof}

Conditions \((L_0)\) and \((L_\infty)\) do not enlarge
\((H_1)\)--\((H_2)\); their advantage is that the comparison weights are
read directly from \(f\).  For example, if \(f\) is differentiable in its
second variable at zero uniformly in \(t\), then typically
\(m_0(t)=f_u(t,0)+\beta(t)\).  Likewise, a uniform expansion
\(f(t,s)=m_\infty(t)s+o(s)\) makes \((L_\infty)\) immediate.

\section{An example}\label{sec:example}

The following example verifies all assumptions explicitly and shows that the
sign-changing class covered by the result is nonempty.

\begin{example}\label{ex:explicit}
On \(I=[0,1]\), let
\begin{equation}\label{example-f}
f(t,u)=\bigl(16\sin^2(\pi t)-1\bigr)\frac{u}{1+u^2}.
\end{equation}
Then the problem
\begin{equation}\label{example-problem}
\begin{cases}
-\displaystyle\left(\dfrac{u'}{\sqrt{1-(u')^2}}\right)'
=\bigl(16\sin^2(\pi t)-1\bigr)\dfrac{u}{1+u^2},
&t\in(0,1),\\[1.2ex]
u'(0)=0,\qquad u(1)=0
\end{cases}
\end{equation}
has at least one positive solution.
\end{example}

\begin{proof}
Clearly \(f\in C([0,1]\times[0,+\infty),\R)\) and \(f(t,0)=0\).  For
\(u>0\), the sign of \(f(t,u)\) is the sign of
\(16\sin^2(\pi t)-1\).  Thus \(f\) is negative near \(t=0,1\) and
positive near \(t=1/2\).

Choose \(\beta(t)\equiv1\).  Since
\(16\sin^2(\pi t)-1\ge-1\), we have
\(f(t,u)\ge-u/(1+u^2)\ge-u=-\beta(t)u\), so \((H_0)\) is satisfied.  For
\(u>0\),
\((f(t,u)+u)/u=(16\sin^2(\pi t)+u^2)/(1+u^2)\),
and therefore
\[
\frac{f(t,u)+u}{u}
\longrightarrow m_0(t):=16\sin^2(\pi t)
\qquad(u\to0^+)
\]
uniformly in \(t\in[0,1]\).  Indeed,
\[
\max_{t\in[0,1]}
\left|
\frac{f(t,u)+u}{u}-16\sin^2(\pi t)
\right|
\le\frac{15u^2}{1+u^2}\longrightarrow0.
\]

To check \eqref{L0-eigen}, take
\(v(t)=\cos(\pi t/2)\) in \eqref{rayleigh}.  Then \(v'(0)=v(1)=0\), and
\[
\int_0^1v^2\dd t=\frac12,
\qquad
\int_0^1(v')^2\dd t=\frac{\pi^2}{8},
\qquad
\int_0^116\sin^2(\pi t)v^2\dd t=4.
\]
Since \(\beta\equiv1\),
\begin{equation}\label{example-inner-eigen}
\lambda_1(m_0)
\le
\frac{\displaystyle\int_0^1\bigl((v')^2+v^2\bigr)\dd t}
{\displaystyle\int_0^1m_0v^2\dd t}
=\frac{\pi^2+4}{32}<1.
\end{equation}
Hence \((L_0)\) holds.

Finally,
\[
\frac{f(t,u)}u
=\frac{16\sin^2(\pi t)-1}{1+u^2},
\qquad
\max_{t\in[0,1]}
\left|\frac{f(t,u)}u\right|
\le\frac{15}{1+u^2}\longrightarrow0.
\]
Thus \(m_\infty\equiv0\) is admissible in \((L_\infty)\), and by
convention \(\mu_1(m_\infty^+)=+\infty>1\).  Corollary
\ref{cor:asymptotic} applies and proves the assertion.

For comparison with the hypotheses of Theorem~\ref{thm:main}, one may also
take
\[
\sigma=1,
\qquad b_0(t)=8\sin^2(\pi t),
\qquad \eta=\sqrt{14},
\qquad b_\infty(t)\equiv1.
\]
Then
\[
f(t,u)+u\ge b_0(t)u\quad(0\le u\le1),
\qquad
f(t,u)\le b_\infty(t)u\quad(u\ge\sqrt{14}),
\]
while
\[
\lambda_1(b_0)\le\frac{\pi^2+4}{16}<1,
\qquad
\mu_1(b_\infty)=\frac{\pi^2}{4}>1.
\]
Hence the example also satisfies \((H_1)\) and \((H_2)\) directly.
\end{proof}

\end{document}